\newtheorem{theorem}{Theorem}
\newtheorem{lemma}[theorem]{Lemma}
\newtheorem{proposition}[theorem]{Proposition}
\DeclareMathOperator{\N}{N}
\DeclareMathOperator{\Tr}{Tr}
\title{Trinomial Planar Functions on Cubic and Quartic Extensions of Finite Fields}
\author{Ruikai Chen\textsuperscript{1,2}\and Sihem Mesnager\textsuperscript{1,2,3}}
\date{\small\textsuperscript{1}Department of Mathematics, University of Paris VIII, F-93526 Saint-Denis\\\textsuperscript{2}Laboratory Analysis, Geometry and Applications, LAGA, University Sorbonne Paris Nord, CNRS, UMR 7539, F-93430, Villetaneuse, France\\\textsuperscript{3}Telecom Paris, Polytechnic institute of Paris, 91120 Palaiseau, France\\Emails: \href{mailto:chen.rk@outlook.com}{chen.rk@outlook.com}\quad\href{mailto:smesnager@univ-paris8.fr}{smesnager@univ-paris8.fr}}
\begin{document}

\maketitle

\begin{abstract}
Planar functions, introduced by Dembowski and Ostrom, are functions from a finite field to itself that give rise to finite projective planes. They exist, however, only for finite fields of odd characteristics. They have attracted much attention in the last decade thanks to their interest in theory and those deep and various applications in many fields. This paper focuses on planar trinomials over cubic and quartic extensions of finite fields. Our achievements are obtained using connections with quadratic forms and classical algebraic tools over finite fields. Furthermore, given the generality of our approach, the methodology presented could be employed to drive more planar functions on some finite extension fields.
\end{abstract}

\noindent
{\it Keywords.}  Planar function, Finite field, Extension Field, Algebraic curve, Polynomial.\\
{\bf Mathematics Subject Classification: } 11R32, 12E10, 11G20, 11R11, 11R16, 11T06, 12E10, 51E15.

\section{Introduction}

Consider an extension $\mathbb F_{q^n}/\mathbb F_q$ of finite fields of odd characteristic. The function induced by a polynomial $f$ over $\mathbb F_{q^n}$ is called bent, if for every $b\in\mathbb F_{q^n}$,
\[\left|\sum_{t\in\mathbb F_{q^n}}\psi(f(t)-bt)\right|^2=q^n,\]
where $\psi$ is the canonical additive character of $\mathbb F_{q^n}$. In addition, it is called a planar function if $f(x+c)-f(x)$ is a permutation on $\mathbb F_{q^n}$ for every $c\in\mathbb F_{q^n}^*$. As a well-known result, $f(x)$ is planar if and only if $cf(x)$ is bent for every $c\in\mathbb F_{q^n}^*$. Constructions of planar functions can be found in \cite{coulter2008commutative,coulter1997planar, ding2006family,zha2009perfect}.

Historically, planar functions with odd characteristics have been introduced by Dembowski and Ostrom (\cite{Dembowski-Ostrom68}), aiming to design finite projective planes with odd characteristics. These functions are also connected with important mathematical objects, such as permutation polynomials over finite fields. The notion of planar functions coincides with the one of perfect nonlinear (PN) functions from $\mathbb F_q$ to itself (see, e.g., \cite[Chapter 6, pp. 296]{Carlet-2021}, \cite{Pott-2016} and \cite{Zieve}). Planar functions have deep applications in different areas of mathematics for communication (coding theory, cryptography, and related topics such as combinatorics).

In particular, it has been shown that planar functions are helpful in DES-like cryptosystems (\cite{Nyberg-Knudsen93}). In addition, it has been demonstrated in several articles (see, e.g., \cite{Carlet-Ding-Yuan-2005,Ding-et-al-2009,Yuan-et-al-2006}) that planar functions allow constructing error-correcting codes, which are then employed to design secret sharing schemes. Planar functions are also applied to the constructions of authentication codes (\cite{Ding-Niederreiter}), constant composition codes (\cite{Ding-et-al-2005}) and signal sets (\cite{Ding-et-al-2007}). Besides, planar functions induce many combinatorial objects such as skew Hadamard difference sets, Paley type partial difference sets (\cite{Weng}), relative difference sets (\cite{Ganley-1975}) and symplectic spreads (\cite{Abdukhalikov}). Also, it has been shown in two main papers \cite{Zhou-2013} and \cite{Schmidt-Zhou-2014} that a natural analogue notion of planar functions over fields of characteristic $2$ could be introduced and studied, which is sometimes called pseudo-planar or modified planar (see \cite[Chapter 6, pp. 296]{Carlet-2021}). Such planar functions have similar properties concerning relative difference sets and finite geometries as their counterparts in odd characteristics. Finally, by employing various methods, much attention has been addressed to the problem of the existence and non-existence of planar mappings. For example, the classification of planar monomials over fields of prime square order has been obtained by Coulter (\cite{coulter2006}). Very recent results on this framework are given by Bergman et al. (\cite{Bergman-2022}) devoted to classifying planar monomials over fields of order a prime cubed, and by Beierle and Felke toward the classification of planar monomials over finite fields of small order (\cite{Beierle-Felke-2022,Beierle-Felke-2022-bis}).

Note that most of known planar functions on $\mathbb F_{q^n}$ are of the form
\[f(x)=\sum_{0\le j\le i<n}a_{ij}x^{q^i+q^j},\]
also known as Dembowski-Ostrom polynomials. Some work has been done on this class of polynomials. For example, those planar binomial functions on $\mathbb F_{q^3}$ are classified in \cite{kyuregyan2012some}. Also, the sufficient and necessary conditions for the polynomial $x^{q^2+1}+x^{q+1}+ux^2$ ($u\in\mathbb F_{q^3}$) to be planar are given in \cite{kyureghyan2012planarity} and \cite{coulter2013conjecture}, as well as the polynomial $x^{q^2+1}+ax^{q+1}+bx^2$ considered in \cite{Bartoli-Bonini-2022}, however, with coefficients restricted in $\mathbb F_q$.

In this paper, we will extend these results. In Section \ref{cubic}, we provide characterizations of two classes of planar trinomials on $\mathbb F_{q^3}$. The polynomials from \cite{kyureghyan2012planarity}, \cite{coulter2013conjecture} and \cite{Bartoli-Bonini-2022} are special cases of those in Theorem \ref{maintheo2}. In Section \ref{quartic}, we construct planar trinomials on $\mathbb F_{q^4}$. The method used there can be applied to more planar functions on extensions of finite fields.

\section{An Overview of the Approach}

Consider the polynomial ring $\mathbb F_{q^n}[x]$ with an indeterminate $x$. When discussing functions on $\mathbb F_{q^n}$, we identify them with polynomials in $\mathbb F_{q^n}[x]$ modulo $x^{q^n}-x$. We focus on planar functions on $\mathbb F_{q^n}$ are of the form
\[f(x)=\sum_{0\le j\le i<n}a_{ij}x^{q^i+q^j}.\]
In particular, the monomial $x^{q^k+1}$ is planar if $\frac n{\gcd(k,n)}$ is odd. For the general case, we can investigate $cf(x)$ for every $c\in\mathbb F_{q^n}^*$. In fact, $\Tr(cf(x))$ is a quadratic form on $\mathbb F_{q^n}$ as a vector space over $\mathbb F_q$, where $\Tr$ denotes the trace function from $\mathbb F_{q^n}$ to $\mathbb F_q$; moreover,
\[\Tr(cf(x))=\Tr\left(\sum_{0\le j\le i<n}(ca_{ij})^{q^{n-j}}x^{q^{i-j}+1}\right).\]
It has been proved (e.g., in \cite[Proposition 2]{helleseth2006monomial}) that $cf(x)$ is a bent function on $\mathbb F_{q^n}$ if and only if the corresponding quadratic form is nondegenerate, if and only if
\[\left|\sum_{t\in\mathbb F_{q^n}}\psi(cf(t))\right|^2=q^n.\]
To determine whether $f$ is a planar function, we calculate the determinant of the quadratic form $\Tr(cf(x))$ for every $c\in\mathbb F_{q^n}^*$. Specifically, it can be characterized given the coefficients as follows.

\begin{lemma}\label{nondegenerate}
Let $f(x)=ax^{q+1}+bx^2$ for some $a,b\in\mathbb F_{q^n}^*$. Then
\[\left|\sum_{t\in\mathbb F_{q^n}}\psi(f(t))\right|^2=q^n\]
if and only if
\begin{itemize}
\item $\Tr(a)^2-4\N(b)\ne0$ in the case $n=2$, and
\item $4\N(b)+\N(a)-\Tr\big(a^2b^{q^2}\big)\ne0$ in the case $n=3$,
\end{itemize}
where $\Tr$ and $\N$ are the trace and the norm function from $\mathbb F_{q^n}$ to $\mathbb F_q$ respectively.
\end{lemma}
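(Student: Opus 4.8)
The plan is to convert the stated equality into the nondegeneracy of a quadratic form and then compute an explicit determinant. By the discussion preceding the statement (the equivalence between bentness and nondegeneracy of the associated quadratic form, as in \cite{helleseth2006monomial}), the equality $\left|\sum_{t\in\mathbb F_{q^n}}\psi(f(t))\right|^2=q^n$ holds if and only if the quadratic form $Q(x)=\Tr(f(x))$ on $\mathbb F_{q^n}$, viewed as an $n$-dimensional vector space over $\mathbb F_q$, is nondegenerate. So the first step is to write down the associated symmetric $\mathbb F_q$-bilinear form $B(x,y)=Q(x+y)-Q(x)-Q(y)$ for $f(x)=ax^{q+1}+bx^2$. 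Expanding $(x+y)^{q+1}=x^{q+1}+x^qy+xy^q+y^{q+1}$ and $(x+y)^2=x^2+2xy+y^2$, and using $\Tr(z^q)=\Tr(z)$ to move the Frobenius powers off $y$, one obtains $B(x,y)=\Tr(\ell(x)y)$ where
\[\ell(x)=2bx+ax^q+a^{q^{n-1}}x^{q^{n-1}}\]
is a $q$-linearized polynomial. Since the trace form is nondegenerate, the radical of $B$ is exactly the kernel of $\ell$; hence $Q$ is nondegenerate precisely when $\ell$ is a bijective $\mathbb F_q$-linear endomorphism of $\mathbb F_{q^n}$, i.e.\ when $\det\ell\ne0$.

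The second step is to evaluate $\det\ell$ using the classical formula that identifies the determinant of a $q$-linearized polynomial $\sum_{i=0}^{n-1}c_ix^{q^i}$ with the autocirculant (Dickson) determinant $\det\big(c_{(j-i)\bmod n}^{q^i}\big)_{0\le i,j<n}$. For $n=2$ the coefficients are $c_0=2b$ and $c_1=a+a^q=\Tr(a)$, and the $2\times2$ determinant is $4\N(b)-\Tr(a)^2$; since $2\ne0$ this vanishes exactly when $\Tr(a)^2-4\N(b)=0$, the claimed criterion. For $n=3$ the coefficients are $c_0=2b$, $c_1=a$, $c_2=a^{q^2}$, and after reducing the Frobenius exponents modulo $q^3-1$ the Dickson matrix becomes the symmetric matrix with rows $(2b,a,a^{q^2})$, $(a,2b^q,a^q)$, $(a^{q^2},a^q,2b^{q^2})$; expanding its determinant gives $8\N(b)+2\N(a)-2\big(a^2b^{q^2}+a^{2q}b+a^{2q^2}b^q\big)$. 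Recognizing the last parenthesis as $\Tr\big(a^2b^{q^2}\big)$, this equals $2\big(4\N(b)+\N(a)-\Tr(a^2b^{q^2})\big)$, which vanishes exactly when $4\N(b)+\N(a)-\Tr\big(a^2b^{q^2}\big)=0$, as required.

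The only delicate part is the $n=3$ determinant bookkeeping: one must carefully use $a^{q^3}=a$, $a^{q^4}=a^q$ and $b^{q^3}=b$ to verify that the Dickson matrix is indeed the symmetric one above, then collect the two identical terms $a^{q^2+q+1}$ into $2\N(a)$, recognize $8b^{q^2+q+1}=8\N(b)$, and spot the trace pattern $a^2b^{q^2}+a^{2q}b+a^{2q^2}b^q=\Tr(a^2b^{q^2})$. No conceptual obstacle is expected once the reduction $B(x,y)=\Tr(\ell(x)y)$ is in hand; the argument rests only on two imported classical facts, the bent $\Leftrightarrow$ nondegenerate equivalence and the Dickson-determinant formula for linearized polynomials. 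As a consistency check, when $\Tr(a)=0$ in the case $n=2$ one has $\ell(x)=2bx$, which is visibly bijective, and indeed $\Tr(a)^2-4\N(b)=-4\N(b)\ne0$ since $b\ne0$.
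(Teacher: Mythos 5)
Your proof is correct and follows essentially the same route as the paper: reduce to nondegeneracy of the quadratic form $\Tr(f(x))$ via the cited equivalence, and compute the determinant of the associated matrix, which is exactly the $2\times2$ and $3\times3$ symmetric matrices the paper displays. You simply carry out explicitly the derivation of those matrices and the determinant expansion that the paper delegates to its references.
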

\begin{proof}
This follows from \cite[Proposition 2]{helleseth2006monomial} or \cite[Section 3]{chen2023evaluation}, where the corresponding matrix is
\[\begin{pmatrix}2b&a^q+a\\a^q+a&2b^q\end{pmatrix}\]
in the case $n=2$, and
\[\begin{pmatrix}2b&a&a^{q^2}\\a&2b^q&a^q\\a^{q^2}&a^q&2b^{q^2}\end{pmatrix}\]
in the case $n=3$.
\end{proof}

There is an equivalence relation preserving the properties of planar functions. Two Dembowski-Ostrom polynomials $f_0$ and $f_1$ over $\mathbb F_{q^n}$ are called equivalent if there exist linearized permutation polynomial $L_0$ and $L_1$ over $\mathbb F_{q^n}$ such that $f_0(x)=L_0(f_1(L_1(x)))$. Note that all those planar functions on $\mathbb F_{q^3}$ are equivalent to either $x^2$ or $x^{q+1}$ according to \cite[Lemma 2.1]{coulter2013conjecture}.

The following result will be useful for the remaining part.

\begin{lemma}[{\cite[Theorem 5.3]{moisio2008kloosterman}}]\label{tr_n}
For any $r\in\mathbb F_q$ and $s\in\mathbb F_q^*$, there exists an element $\alpha\in\mathbb F_{q^3}^*$ with $\Tr(\alpha)=r$ and $\N(\alpha)=s$. In other words, given $a\in\mathbb F_{q^3}^*$, the function $\Tr(ax^{q-1})$ takes all values in $\mathbb F_q$.
\end{lemma}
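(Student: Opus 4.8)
The plan is to establish the first formulation directly and then read off the second. So, given $r\in\mathbb F_q$ and $s\in\mathbb F_q^*$, I want to produce an $\alpha\in\mathbb F_{q^3}^*$ with $\Tr(\alpha)=r$ and $\N(\alpha)=s$. The key observation is that if $\alpha$ generates $\mathbb F_{q^3}$ over $\mathbb F_q$, then its characteristic polynomial over $\mathbb F_q$ is $(x-\alpha)(x-\alpha^q)(x-\alpha^{q^2})=x^3-\Tr(\alpha)x^2+ux-\N(\alpha)$ for some $u\in\mathbb F_q$ (the middle coefficient being the remaining elementary symmetric function of the conjugates), and this polynomial is the minimal polynomial of $\alpha$, hence irreducible over $\mathbb F_q$. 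Conversely, any root of a polynomial $g_u(x)=x^3-rx^2+ux-s$ with $u\in\mathbb F_q$ that is irreducible over $\mathbb F_q$ lies in $\mathbb F_{q^3}$ and has trace $r$ and norm $s$. So the whole problem collapses to finding one $u\in\mathbb F_q$ for which $g_u$ is irreducible over $\mathbb F_q$.

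That step is a short count. A cubic polynomial over a field is irreducible exactly when it has no root in the field, so I only need a $u$ making $g_u$ root-free over $\mathbb F_q$. If $c\in\mathbb F_q$ is a root of $g_u$, then $c\ne0$ since $g_u(0)=-s\ne0$, and the relation $c^3-rc^2+uc-s=0$ forces $u=sc^{-1}+rc-c^2$; hence every ``bad'' $u$ (one for which $g_u$ has a root in $\mathbb F_q$) lies in $\{sc^{-1}+rc-c^2:c\in\mathbb F_q^*\}$, a set of at most $q-1$ elements. Since $\mathbb F_q$ has $q>q-1$ elements, some $u$ is ``good''; taking a root $\alpha$ of the corresponding irreducible $g_u$ gives $\alpha\in\mathbb F_{q^3}$ with $\alpha\ne0$, and reading off the coefficients of its characteristic polynomial $g_u$ yields $\Tr(\alpha)=r$ and $\N(\alpha)=s$.

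Finally I would deduce the reformulation. The map $x\mapsto x^{q-1}$ on $\mathbb F_{q^3}^*$ is a homomorphism with kernel $\mathbb F_q^*$, so its image is a subgroup of order $(q^3-1)/(q-1)=q^2+q+1$; since $\N(x^{q-1})=\N(x)^{q-1}=1$, this image lies in $\ker\N$, and because $\lvert\ker\N\rvert=(q^3-1)/(q-1)=q^2+q+1$ as well, the image equals $\ker\N$. Hence $\{ax^{q-1}:x\in\mathbb F_{q^3}^*\}=a\ker\N=\{z\in\mathbb F_{q^3}^*:\N(z)=\N(a)\}$, so the values attained by $\Tr(ax^{q-1})$ are precisely the traces of the elements of norm $\N(a)$, which fill out $\mathbb F_q$ by the first part applied with $s=\N(a)$. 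I do not expect a real obstacle: the argument is elementary, and the only points demanding care are that ``reducible if and only if it has a root'' is special to degree at most $3$, and that the two formulations become literally the same statement only after identifying the image of $x\mapsto x^{q-1}$ with $\ker\N$. (If one preferred to sidestep the symmetric-function bookkeeping, one could instead bound the number of $\alpha$ of prescribed trace and norm via additive and multiplicative character sums and the Gauss-sum estimate $q^{3/2}$, but that is longer and only resolves the claim for $q$ away from the smallest values.)
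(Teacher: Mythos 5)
Your proof is correct, but it takes a different route from the paper: the paper does not prove this lemma at all, it simply cites Theorem 5.3 of Moisio's \emph{Kloosterman sums, elliptic curves, and irreducible polynomials with prescribed trace and norm}, which counts the irreducible cubics with prescribed trace and norm exactly in terms of Kloosterman sums (and in particular shows the count is positive). Your argument replaces that machinery with an elementary pigeonhole count: every $u$ for which $x^3-rx^2+ux-s$ has a root in $\mathbb F_q$ is of the form $sc^{-1}+rc-c^2$ with $c\in\mathbb F_q^*$, so at most $q-1$ values of $u$ are bad and some irreducible $g_u$ exists; the identification of the image of $x\mapsto x^{q-1}$ with $\ker\N$ by comparing orders is also correct and gives the second formulation. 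What the citation buys is strictly more information (exact counts of elements with given trace and norm), which the paper does not need; what your argument buys is a short, self-contained proof valid uniformly for all $q$, with no dependence on character-sum estimates — as you note yourself, a crude Weil-type bound would only give existence for $q$ large enough, whereas your count has no such restriction. The only nitpick is notational: the paper's $\Tr(\alpha)$ is the coefficient-wise trace $\alpha+\alpha^q+\alpha^{q^2}$, which indeed matches the negative of the $x^2$-coefficient of the characteristic polynomial as you use it, so everything is consistent.
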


\section{Planar Trinomials on Cubic Extensions}\label{cubic}

Let $\Tr$ and $\N$ denote the trace and the norm function from $\mathbb F_{q^3}$ to $\mathbb F_q$ respectively. We characterize two classes of trinomial planar functions in what follows.

\begin{proposition}\label{AB}
For $A,B\in\mathbb F_{q^3}^*$ and $r\in\mathbb F_q$, the function $\Tr(Ax^{q-1}+Bx^{1-q})+r$ has no zero in $\mathbb F_{q^3}^*$ if and only if $r=\frac{\N(A)+\N(B)}{AB}\ne0$.
\end{proposition}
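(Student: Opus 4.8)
The plan is to homogenize the problem and turn it into a statement about anisotropy of a ternary cubic form. Put $g(x)=\Tr(Ax^{q-1}+Bx^{1-q})+r$. Multiplying by $\N(x)=x^{1+q+q^{2}}$ and reducing exponents modulo $q^{3}-1$ gives
\[\N(x)\,g(x)=\Tr\!\big(Ax^{q^{2}+2q}+Bx^{q^{2}+2}\big)+r\,\N(x)=:h(x),\]
and $h$ (with $h(0)=0$) is a homogeneous degree-$3$ map on $\mathbb F_{q^{3}}$ viewed as a $3$-dimensional $\mathbb F_{q}$-space, i.e.\ a cubic form over $\mathbb F_{q}$ once an $\mathbb F_{q}$-basis is fixed. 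Expanding the traces, $h$ involves only the monomials $x^{q^{i}+2q^{j}}$ with $i\neq j$ (coefficients among $A,A^{q},A^{q^{2}},B,B^{q},B^{q^{2}}$) and $x^{1+q+q^{2}}$ (coefficient $r$); in particular no monomial $x^{3q^{i}}$ occurs. Since $\N(x)\neq0$ for $x\neq0$, the proposition is equivalent to: $h$ is anisotropic (i.e.\ $h(x)\neq0$ for all $x\neq0$) if and only if $r=\frac{\N(A)+\N(B)}{AB}\neq0$.

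For the ``if'' direction I would argue by explicit construction, not needing the classification used below. Assume $r=\frac{\N(A)+\N(B)}{AB}\neq0$; as this value lies in $\mathbb F_{q}$ and is nonzero, necessarily $AB\in\mathbb F_{q}^{*}$ and $\N(A)+\N(B)\neq0$. Pick $\alpha\in\mathbb F_{q^{3}}^{*}$ with $\N(\alpha)=\N(A)/(AB)$ (possible since the norm is onto $\mathbb F_{q}^{*}$) and set $\beta=A\alpha/\N(\alpha)$. A short computation---using $\N(AB)=(AB)^{3}$ because $AB\in\mathbb F_{q}$---gives $\N(\beta)=\N(B)/(AB)$, $\beta\,\alpha^{q}\alpha^{q^{2}}=A$, $\alpha\,\beta^{q}\beta^{q^{2}}=B$, $\N(\alpha)+\N(\beta)=r$, and these are exactly the relations expressing that $h$ agrees coefficientwise with $\N(\alpha x+\beta x^{q})$. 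Hence $h(x)=\N(\alpha x+\beta x^{q})$, so $g(x)=\N(\alpha+\beta x^{q-1})$ for $x\neq0$, which vanishes precisely when $-\alpha/\beta$ lies in $\{x^{q-1}:x\in\mathbb F_{q^{3}}^{*}\}$---the norm-one subgroup of $\mathbb F_{q^{3}}^{*}$. Thus $g$ has a zero iff $\N(-\alpha/\beta)=1$, i.e.\ iff $\N(\alpha)+\N(\beta)=0$, i.e.\ iff $r=0$; since $r\neq0$, $g$ has no zero.

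For the ``only if'' direction, assume $g$ has no zero, so $h$ is an anisotropic cubic form in three variables over $\mathbb F_{q}$. The essential ingredient is the classification of such forms: over a finite field, every anisotropic ternary cubic form is a nonzero scalar multiple of the norm form $\N_{\mathbb F_{q^{3}}/\mathbb F_{q}}$ of an $\mathbb F_{q}$-linear form injective on $\mathbb F_{q}^{3}$. (Here algebraic curves enter: a geometrically irreducible plane cubic over $\mathbb F_{q}$ always has a rational point---smooth ones by Hasse--Weil, singular ones through their necessarily rational singular point---and a reducible plane cubic always yields a rational point through a Galois-fixed line or conic component, or an intersection point; so anisotropy forces the cubic to split over $\mathbb F_{q^{3}}$ into a Galois orbit of three lines with no common zero.) Absorbing the scalar into the linear form (the norm is onto), we get $h(x)=\N(\ell(x))$ with $\ell(x)=\alpha x+\beta x^{q}+\gamma x^{q^{2}}$ a linearized permutation polynomial. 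Compare coefficients---permissible since the functions $x^{q^{i}+q^{j}+q^{k}}$ are $\overline{\mathbb F_{q}}$-linearly independent: the coefficients of $x^{3},x^{3q},x^{3q^{2}}$ in $\N(\ell(x))$, namely $\alpha\gamma^{q}\beta^{q^{2}}$ and its conjugates, vanish because $h$ has no such monomial, forcing $\alpha\beta\gamma=0$. As $\N(\ell(x))=\N(\ell(x)^{q})$ and $\ell(x)^{q}$ again has the form $\alpha'x+\beta'x^{q}+\gamma'x^{q^{2}}$ with $(\alpha',\beta',\gamma')$ a cyclic Frobenius-shift of $(\alpha,\beta,\gamma)$, after relabelling we may take $\gamma=0$; then $A,B\neq0$ force $\alpha,\beta\neq0$, and matching the remaining coefficients gives $\beta\,\alpha^{q}\alpha^{q^{2}}=A$, $\alpha\,\beta^{q}\beta^{q^{2}}=B$, $\N(\alpha)+\N(\beta)=r$, while injectivity of $\alpha x+\beta x^{q}$ forces $\N(\alpha)+\N(\beta)\neq0$, hence $r\neq0$. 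Eliminating $\alpha,\beta$ from the first two relations gives $AB\,\N(\alpha)=\N(A)$; since $\N(\alpha)\in\mathbb F_{q}^{*}$ this forces $AB\in\mathbb F_{q}^{*}$, then $\N(\beta)=\N(B)/(AB)$, whence $r=\N(\alpha)+\N(\beta)=\frac{\N(A)+\N(B)}{AB}$.

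The step I expect to be the main obstacle is invoking (and, if a self-contained account is wanted, proving) the classification of anisotropic ternary cubic forms over finite fields---the only genuinely non-elementary input---together with the somewhat delicate bookkeeping in the coefficient comparison: justifying that two cubic forms agreeing as functions on $\mathbb F_{q^{3}}$ agree coefficientwise, and using the cyclic Frobenius-shift to reduce to the single case $\gamma=0$ rather than handling three symmetric subcases.
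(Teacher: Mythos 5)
Your proof is correct, but it takes a genuinely different route from the paper's. The paper never homogenizes: it converts ``$\Tr(Ax^{q-1}+Bx^{1-q})+r$ has a zero'' into the failure of the linearized polynomial $B^{q^2}x^{q^2}+Ax^q+ux$ to permute $\mathbb F_{q^3}$ for some $u$ with $\Tr(u)=r$, i.e.\ the vanishing of the determinant $\N(u)+\N(A)+\N(B)-\Tr\big(ABu^{q^2}\big)$; when $AB\in\mathbb F_q$ this is settled by the prescribed trace-and-norm result (Lemma \ref{tr_n}), and when $AB\notin\mathbb F_q$ a zero is always produced by applying the Aubry--Perret bound to an explicit affine plane cubic in the $u$-coordinates. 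You instead multiply by $\N(x)$ to obtain the ternary cubic form $h$ and rest everything on the classification of anisotropic ternary cubic forms over $\mathbb F_q$ as norm forms of injective linear maps, followed by coefficient matching. Both arguments ultimately consume a Weil-type bound (yours through Hasse--Weil in the smooth geometrically irreducible case; the paper's through Aubry--Perret and through Lemma \ref{tr_n}, itself proved via elliptic curves). What your route buys is a structural explanation of the answer: the ``if'' direction becomes the explicit identity $g(x)=\N(\alpha+\beta x^{q-1})$, and the value $\frac{\N(A)+\N(B)}{AB}$ drops out of matching $h$ against $\N(\alpha x+\beta x^q)$. What it costs is that the classification lemma, which you only sketch, must be proved carefully: the geometrically irreducible singular case via the uniqueness (hence rationality) of the singular point, and the geometrically reducible case via the observation that any Galois-stable component of degree at most two carries rational points, so anisotropy forces a single Galois orbit of three non-concurrent lines over $\mathbb F_{q^3}$ --- note that your sentence ``a reducible plane cubic always yields a rational point'' is literally false as stated and needs exactly this exceptional case carved out, as your own conclusion indicates. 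The remaining details you flag (coefficientwise comparison via linear independence of the ten monomials $x^{q^i+q^j+q^k}$ as functions on $\mathbb F_{q^3}$, the Frobenius shift reducing to $\gamma=0$, and the elimination giving $AB=\N(\alpha)\N(\beta)\in\mathbb F_q^*$) all check out, including for small $q$.
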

\begin{proof}
The equation $\Tr(Ax^{q-1}+Bx^{1-q})+r=0$ is equivalent to $Ax^{q-1}+B^{q^2}x^{q^2-1}+u=0$ and $\Tr(u)=r$ for some $u\in\mathbb F_{q^3}$. Here $Ax^{q-1}+B^{q^2}x^{q^2-1}+u$ has a zero in $\mathbb F_{q^3}^*$ if and only if $B^{q^2}x^{q^2}+Ax^q+ux$ does not permute $\mathbb F_{q^3}$; that is,
\[\N(u)+\N(A)+\N(B)-\Tr\big(ABu^{q^2}\big)=\begin{vmatrix}u&A&B^{q^2}\\B&u^q&A^q\\A^{q^2}&B^q&u^{q^2}\end{vmatrix}=0.\]
Therefore, the function $\Tr(Ax^{q-1}+Bx^{1-q})+r$ has a zero in $\mathbb F_{q^3}^*$ if and only if
\begin{equation}\label{tr_u}\Tr(u)=r\quad\text{and}\quad\N(u)+\N(A)+\N(B)-\Tr\big(ABu^{q^2}\big)=0\end{equation}
for some $u\in\mathbb F_{q^3}$.

Suppose $AB\in\mathbb F_q$. If $r=\frac{\N(A)+\N(B)}{AB}\ne0$, then $\Tr(u)=r$ implies that $u\ne0$ and
\[\begin{split}&\mathrel{\phantom{=}}\N(u)+\N(A)+\N(B)-\Tr\big(ABu^{q^2}\big)\\&=\N(u)+\N(A)+\N(B)-ABr\\&=\N(u)\ne0.\end{split}\]
If $r=\frac{\N(A)+\N(B)}{AB}=0$, then $u=0$ implies \eqref{tr_u}. If $r\ne\frac{\N(A)+\N(B)}{AB}$, then by Lemma \ref{tr_n}, there exists $u\in\mathbb F_{q^3}$ with $\Tr(u)=r$ and
\[\N(u)=-\N(A)-\N(B)+ABr=-\N(A)-\N(B)+\Tr\big(ABu^{q^2}\big),\]
which means $\eqref{tr_u}$ holds.

Suppose $AB\notin\mathbb F_q$ and let $\beta_0,\beta_1,\beta_2$ be the dual basis of $1,AB,(AB)^2$ in $\mathbb F_{q^3}/\mathbb F_q$. For arbitrary $r\in\mathbb F_q$, if $u^{q^2}=r\beta_0+r_1\beta_1+r_2\beta_2$ for some $r_1,r_2\in\mathbb F_q$, then $\Tr(u)=r$ and
\[\N(u)+\N(A)+\N(B)-\Tr\big(ABu^{q^2}\big)=\N(r\beta_0+r_1\beta_1+r_2\beta_2)+\N(A)+\N(B)-r_1.\]
It remains to show that the right side is zero for some $r_1,r_2\in\mathbb F_q$. Let $y$ be an element in some extension of $\mathbb F_q(x)$ with
\begin{equation}\label{1}\N^\prime(r\beta_0+x\beta_1+y\beta_2)+\N(A)+\N(B)-x=0,\end{equation}
where
\[\begin{split}&\mathrel{\phantom{=}}\N^\prime(r\beta_0+x\beta_1+y\beta_2)\\&=(r\beta_0+x\beta_1+y\beta_2)(r\beta_0^q+x\beta_1^q+y\beta_2^q)\big(r\beta_0^{q^2}+x\beta_1^{q^2}+y\beta_2^{q^2}\big).\end{split}\]
Assume $y\in\overline{\mathbb F_q}(x)$ and let $v_\infty$ be the valuation at the infinite place of $\overline{\mathbb F_q}(x)$. If $v_\infty(y)>-1$, then $v_\infty(\N^\prime(r\beta_0+x\beta_1+y\beta_2))=-3$. If $v_\infty(y)<-1$, then $v_\infty(\N^\prime(r\beta_0+y\beta_1+x\beta_2))=3v_\infty(y)$. Both cases contradict the fact $v_\infty(x-\N(A)-\N(B))=-1$. Thus we have $v_\infty(y)=-1$. Note that $y$ is integral over $\overline{\mathbb F_q}[x]$, which is an integrally closed domain. Then $y=\lambda x+\mu$ for some $\lambda,\mu\in\overline{\mathbb F_q}$, and
\[\begin{split}&\mathrel{\phantom{=}}\N^\prime(r\beta_0+x\beta_1+y\beta_2)\\&=((\beta_1+\lambda\beta_2)x+r\beta_0+\mu\beta_2)((\beta_1^q+\lambda\beta_2^q)x+r\beta_0^q+\mu\beta_2^q)\big(\big(\beta_1^{q^2}+\lambda\beta_2^{q^2}\big)x+r\beta_0^{q^2}+\mu\beta_2^{q^2}\big).\end{split}\]
Since $v_\infty(\N^\prime(r\beta_0+x\beta_1+y\beta_2))=-1$, without loss of generality we have
\[\beta_1+\lambda\beta_2=\beta_1^q+\lambda\beta_2^q=0.\]
Then $\lambda\in\mathbb F_q$, but $\beta_1$ and $\beta_2$ are linearly independent over $\mathbb F_q$, a contradiction. This shows that $\mathbb F_q(x,y)/\mathbb F_q(x)$ is an extension of function fields of degree $3$ with constant field $\mathbb F_q$.

Consider the corresponding plane projective curve defined by \eqref{1}, which is absolutely irreducible, for a cubic polynomial reducible over a field must have a root in it (see also \cite[Corollary 3.6.8]{stichtenoth2009}). As a consequence of \cite{aubry1996weil}, the number $N$ of rational points over $\mathbb F_q$ of the curve satisfies
\[|N-q-1|\le(3-1)(3-2)\sqrt q=2\sqrt q,\]
which means
\[N\ge q+1-2\sqrt q>0,\]
as $q\ge3$. Clearly, there is no point at infinity on this curve, so there exists at least one element in $\mathbb F_{q^3}$ with desired properties.
\end{proof}

\begin{theorem}\label{maintheo2}
For $a,b\in\mathbb F_{q^3}^*$, the function $x^{q^2+1}+ax^{q+1}+bx^2$ is planar on $\mathbb F_{q^3}$ if and only if $a=b^{q+1}$ with $\N(b)\ne1$, or $\N(a)-2ab^{q^2}+1=0$ with $\N(a)^2\ne1$.
\end{theorem}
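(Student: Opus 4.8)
\textbf{Proof plan.} The idea is to reduce planarity of $f(x)=x^{q^2+1}+ax^{q+1}+bx^2$ to the non-vanishing of a single $\mathbb F_q$-valued function of the shape treated in Proposition~\ref{AB}, and then to decode the arithmetic condition that comes out. By Section~2, $f$ is planar on $\mathbb F_{q^3}$ iff $\bigl|\sum_{t\in\mathbb F_{q^3}}\psi(cf(t))\bigr|^2=q^3$ for all $c\in\mathbb F_{q^3}^*$. Using $\Tr\bigl(cx^{q^2+1}\bigr)=\Tr\bigl(c^qx^{q+1}\bigr)$ we get $\Tr(cf(x))=\Tr\bigl((c^q+ca)x^{q+1}+cbx^2\bigr)$, so by Lemma~\ref{nondegenerate} (the case $c^q+ca=0$ being handled directly, as $x\mapsto\Tr(cbx^2)$ is then nondegenerate) the requirement becomes $D(c)\ne0$ for all $c\in\mathbb F_{q^3}^*$, where $D(c)=4\N(cb)+\N(c^q+ca)-\Tr\bigl((c^q+ca)^2(cb)^{q^2}\bigr)$.

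Next I would factor out $\N(c)$: writing $c^q+ca=c\bigl(c^{q-1}+a\bigr)$, $(cb)^{q^2}=c^{q^2}b^{q^2}$, $c^{2+q^2}=\N(c)c^{1-q}$, $\N\bigl(c^{q-1}\bigr)=1$, and expanding $\N(w+a)$ and $w^{-1}(w+a)^2$ with $w=c^{q-1}$, one obtains $D(c)=\N(c)\bigl(\Tr\bigl(Ac^{q-1}+Bc^{1-q}\bigr)+r\bigr)$ with
\[A=a^{q+q^2}-b^{q^2},\qquad B=a-a^2b^{q^2},\qquad r=4\N(b)+1+\N(a)-2\Tr\bigl(ab^{q^2}\bigr).\]
Thus $f$ is planar iff $\Tr\bigl(Ax^{q-1}+Bx^{1-q}\bigr)+r$ has no zero in $\mathbb F_{q^3}^*$. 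When $A=0$ or $B=0$ (equivalently $ab^{q^2}=\N(a)$ or $ab^{q^2}=1$) Lemma~\ref{tr_n} shows a zero exists, so $f$ is not planar; when $A,B\in\mathbb F_{q^3}^*$ Proposition~\ref{AB} gives: $f$ planar iff $r=\tfrac{\N(A)+\N(B)}{AB}\ne0$, i.e.\ $r\ne0$ and $rAB=\N(A)+\N(B)$.

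The computational core is to solve $rAB=\N(A)+\N(B)$ with $r\ne0$. Set $t=ab^{q^2}$, $P=\N(a)$, $M=\N(b)$. Substituting $a=tb^{-q^2}$ gives $A=Mb^{-1-q}(P-t)/t$, $B=tb^{-q^2}(1-t)$, hence
\[AB=(t-1)(t-P),\quad \N(A)=P^2-P\Tr(t)+\Tr\bigl(t^{q+1}\bigr)-M,\quad \N(B)=P\bigl(1-\Tr(t)+\Tr\bigl(t^{q+1}\bigr)-PM\bigr),\]
with $\N(t)=PM$. As $\N(A)+\N(B)\in\mathbb F_q$ and $r\ne0$, the equality forces $(t-1)(t-P)\in\mathbb F_q$, hence $t\in\mathbb F_q$ (otherwise $1,t,t^2$ is an $\mathbb F_q$-basis of $\mathbb F_{q^3}$); then $\Tr(t)=3t$, $\Tr\bigl(t^{q+1}\bigr)=3t^2$, $PM=t^3$. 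Substituting $M=t^3/P$ and clearing denominators yields the key identity
\[P\bigl(rAB-\N(A)-\N(B)\bigr)=t\,(2t-P-1)^2(t^2-P),\]
so (as $t\ne0$, $P\ne0$) the condition holds exactly when $t^2=P$ or $2t=P+1$.

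Finally I would translate back. If $t^2=P$, then $t\in\mathbb F_q$ yields $a^{q-1}=b^{q^2-1}=(b^{q+1})^{q-1}$, so $a=\lambda b^{q+1}$ with $\lambda\in\mathbb F_q^*$; then $t=\lambda M$, $P=\lambda^3M^2$, and $t^2=P$ forces $\lambda=1$, i.e.\ $a=b^{q+1}$, whence $t=M$, $P=M^2$, $r=(M-1)^2$, and each of $A\ne0$, $B\ne0$, $r\ne0$ becomes $M\ne1$. If $2t=P+1$, then $t=(P+1)/2\in\mathbb F_q$, $PM=t^3$ gives $r=\tfrac{(P+1)(P-1)^2}{2P}$, while $A\ne0$, $B\ne0$ become $P\ne1$ and $r\ne0$ becomes $P\ne\pm1$, i.e.\ $\N(a)^2\ne1$. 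Conversely, if $a=b^{q+1}$ with $\N(b)\ne1$, or $\N(a)-2ab^{q^2}+1=0$ with $\N(a)^2\ne1$, the same computations show $A,B\in\mathbb F_{q^3}^*$, $r\ne0$ and $rAB=\N(A)+\N(B)$, so $f$ is planar. The main obstacle is the algebraic identity above: expressing $A,B,\N(A),\N(B)$ through $t,P,M$, using $\N(t)=PM$ and (once $t\in\mathbb F_q$) $t^3=PM$, and recognizing the resulting quartic in $t$ as $(2t-P-1)^2(t^2-P)$; the secondary difficulty is keeping track of the degenerate cases and of the side conditions $\N(b)\ne1$, $\N(a)^2\ne1$.
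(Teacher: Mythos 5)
Your proposal is correct and follows essentially the same route as the paper's proof: reduce planarity to the nonvanishing of the determinant expression via Lemma~\ref{nondegenerate}, factor out $\N(c)$ to obtain the same $A=a^{q^2+q}-b^{q^2}$, $B=a-a^2b^{q^2}$ and constant $r$, invoke Proposition~\ref{AB}, deduce $ab^{q^2}\in\mathbb F_q$ from $AB\in\mathbb F_q$, and verify the same factorization identity in $\delta=ab^{q^2}$ whose vanishing yields the two families. The only caveat is cosmetic: in the subcase $A=B=0$ Lemma~\ref{tr_n} does not apply and one should instead observe directly that then $r=0$, exactly as the paper does.
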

\begin{proof}
For $c\in\mathbb F_{q^3}^*$, note that
\[\Tr\big(c\big(x^{q^2+1}+ax^{q+1}+bx^2\big)\big)=\Tr((c^q+ac)x^{q+1}+bcx^2),\]
and
\[\begin{split}&\mathrel{\phantom{=}}4\N(bc)+\N(c^q+ac)-\Tr\big((c^q+ac)^2(bc)^{q^2}\big)\\&=4\N(bc)+\N(c)+\N(ac)+\Tr\big(ac^{q^2+2}\big)+\Tr(a^{q+1}c^{q+2})\\&\mathrel{\phantom{=}}-\Tr\big(b^{q^2}c^{q^2+2q}+2ab^{q^2}c^{q^2+q+1}+a^2b^{q^2}c^{q^2+2}\big)\\&=\N(c)\big(4\N(b)+1+\N(a)-2\Tr\big(ab^{q^2}\big)\big)\\&\mathrel{\phantom{=}}+\N(c)\Tr\big(\big(a^{q^2+q}-b^{q^2}\big)c^{q-1}+\big(a-a^2b^{q^2}\big)c^{1-q}\big).\end{split}\]
By Lemma \ref{nondegenerate}, the function is planar if and only if
\[\Tr(Ac^{q-1}+Bc^{1-q})+4\N(b)+1+\N(a)-2\Tr\big(ab^{q^2}\big)\ne0\]
for all $c\in\mathbb F_{q^3}^*$, where $A=a^{q^2+q}-b^{q^2}$ and $B=a-a^2b^{q^2}$. This happens only if $AB\ne0$, for otherwise either $AB=0\ne A+B$, or $A=B=0\ne4\N(b)+1+\N(a)-2\Tr\big(ab^{q^2}\big)$. The former is excluded in view of Lemma \ref{tr_n}, and the latter implies $4\N(b)+1+\N(a)-2\Tr\big(ab^{q^2}\big)=0$ by a simple investigation. Then that is equivalent to
\begin{equation}\label{AB}AB\ne0\quad\text{and}\quad\frac{\N(A)+\N(B)}{AB}=4\N(b)+1+\N(a)-2\Tr\big(ab^{q^2}\big)\end{equation}
by Proposition \ref{AB}.

We claim that if $\delta=ab^{q^2}\in\mathbb F_q$, then
\[\begin{split}&\mathrel{\phantom{=}}\N(A)+\N(B)-AB\big(4\N(b)+1+\N(a)-2\Tr\big(ab^{q^2}\big)\big)\\&=\N(a)^{-1}\delta(\delta^2-\N(a))(\N(a)-2\delta+1)^2.\end{split}\]
In this case $A=a^{-1}(\N(a)-\delta)$ and $B=a(1-\delta)$. Then
\[\N(A)+\N(B)=\N(a)^{-1}(\N(a)-\delta)^3+\N(a)(1-\delta)^3,\]
and
\[\begin{split}&\mathrel{\phantom{=}}AB\big(4\N(b)+1+\N(a)-2\Tr\big(ab^{q^2}\big)\big)\\&=(\N(a)-\delta)(1-\delta)(4\N(a)^{-1}\delta^3+1+\N(a)-6\delta)\\&=\N(a)^{-1}(\N(a)-\delta)(1-\delta)(4\delta^3+\N(a)+\N(a)^2-6\N(a)\delta).\end{split}\]
It is routine to expand
\[(\N(a)-\delta)^3+\N(a)^2(1-\delta)^3-(\N(a)-\delta)(1-\delta)(4\delta^3+\N(a)+\N(a)^2-6\N(a)\delta)\]
and
\[\delta(\delta^2-\N(a))(\N(a)-2\delta+1)^2,\]
which are actually equal.

Assume that \eqref{AB} holds. Then $(\N(a)-\delta)(1-\delta)=AB\in\mathbb F_q$, which means $\delta\in\mathbb F_{q^2}\cap\mathbb F_{q^3}=\mathbb F_q$. By the above claim we have $\delta^2=\N(a)$ or $\N(a)-2\delta+1=0$. If $\delta^2=\N(a)$, then
\[a=\frac{\N(a)}{a^{q^2+q}}=\frac{\big(ab^{q^2}\big)^{q^2+q}}{a^{q^2+q}}=b^{q+1},\]
and $\N(b)=ab^{q^2}\ne1$ as $AB\ne0$. If $\N(a)-2\delta+1=0$, then $\N(a)-1=2(\delta-1)\ne0$ and $\N(a)+1=2\delta\ne0$. For the converse, if $a=b^{q+1}$ and $\N(b)\ne1$, then $\delta=\N(b)\in\mathbb F_q^*$, and $\delta^2=\N(b)^2=\N(a)$ with $AB=(\N(b)^2-\N(b))(1-\N(b))\ne0$. If $\N(a)-2ab^{q^2}+1=0$ and $\N(a)^2\ne1$, then $2\delta=\N(a)+1\in\mathbb F_q^*$ and $4AB=(\N(a)-1)(1-\N(a))\ne0$. Then the proof is complete.
\end{proof}

Using the same argument, we characterize another class of trinomials.

\begin{theorem}
For $a,b\in\mathbb F_{q^3}^*$, the function $x^{q+1}+ax^{2q}+bx^2$ is planar on $\mathbb F_{q^3}$ if and only if $4ab=1$ and $\N(2a)\ne-1$, or $4ab\ne1$ and $\N(a)+\N(b)+ab=0$.
\end{theorem}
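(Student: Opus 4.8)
The plan is to follow the proof of Theorem~\ref{maintheo2} line for line. Fixing $c\in\mathbb F_{q^3}^*$, I would first use the $q$-power invariance of the trace to rewrite
\[\Tr\bigl(c(x^{q+1}+ax^{2q}+bx^2)\bigr)=\Tr\bigl(cx^{q+1}+((ca)^{q^2}+cb)x^2\bigr),\]
so that Lemma~\ref{nondegenerate} applies with $A=c$ and $B=(ca)^{q^2}+cb$: the trinomial is planar if and only if $4\N(B)+\N(A)-\Tr(A^2B^{q^2})\ne0$ for every $c\in\mathbb F_{q^3}^*$. The first substantial step is to expand this expression. Splitting the eight summands of $\N(B)=\N(c^{q^2}a^{q^2}+cb)$ into the two diagonal terms $\N(c)\N(a)$, $\N(c)\N(b)$ and two Frobenius orbits of length three, and pulling $\N(c)$ out of everything (using repeatedly $\Tr(z)=\Tr(z^q)=\Tr(z^{q^2})$, $c^{q^3}=c$ and $c^{1+q+q^2}=\N(c)$), I expect the expression to reduce to $\N(c)\bigl(\Tr(A'c^{q-1}+B'c^{1-q})+r'\bigr)$, where $A'=a^{q^2}(4(ab)^q-1)$, $B'=b^{q^2}(4ab-1)$ and $r'=4\N(a)+4\N(b)+1$. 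Hence planarity becomes the statement that $\Tr(A'c^{q-1}+B'c^{1-q})+r'$ has no zero in $\mathbb F_{q^3}^*$.

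Next I would record the feature that makes this case behave differently from Theorem~\ref{maintheo2}: since $a,b\ne0$, one has $A'=0\iff 4(ab)^q=1\iff ab=1/4\iff 4ab=1\iff B'=0$, so $A'$ and $B'$ vanish together. This splits the argument in two. If $4ab=1$, then $A'=B'=0$, the function above is the constant $r'$, and since $b=1/(4a)$ gives $\N(b)=1/(64\N(a))$ one computes $16\N(a)\,r'=(8\N(a)+1)^2$; thus there is no zero exactly when $\N(2a)=8\N(a)\ne-1$. If $4ab\ne1$, then $A',B'\in\mathbb F_{q^3}^*$ and Proposition~\ref{AB} applies: there is no zero if and only if $\N(A')+\N(B')=A'B'r'$ and $r'\ne0$. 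In that situation $A'B'=(\N(A')+\N(B'))/r'\in\mathbb F_q$, and from $A'B'=(ab)^{q^2}(4(ab)^q-1)(4ab-1)$ with $4ab\ne1$ this forces $(ab)^{q^2}=ab$, i.e.\ $ab\in\mathbb F_q$.

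Finally, writing $\delta:=ab\in\mathbb F_q$, I would carry out the polynomial identity: here $A'=a^{q^2}(4\delta-1)$, $B'=b^{q^2}(4\delta-1)$, so $A'B'=\delta(4\delta-1)^2$ and $\N(A')+\N(B')=(\N(a)+\N(b))(4\delta-1)^3$, and a short computation should give
\[\N(A')+\N(B')-A'B'r'=-(4\delta-1)^2\bigl(\N(a)+\N(b)+\delta\bigr).\]
Since $4\delta\ne1$, the condition $\N(A')+\N(B')=A'B'r'$ is then equivalent to $\N(a)+\N(b)+ab=0$, in which case $r'=4(\N(a)+\N(b))+1=1-4ab\ne0$ automatically; conversely $\N(a)+\N(b)+ab=0$ already forces $ab\in\mathbb F_q$, so the identity applies and Proposition~\ref{AB} gives planarity. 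Assembling the two cases yields the stated characterization.

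The step I expect to be the main obstacle is the first expansion: keeping track of which power of Frobenius sits on each monomial so that the six mixed terms, together with the cross term $\Tr(A^2B^{q^2})$, reassemble cleanly into $\Tr(A'c^{q-1})$ and $\Tr(B'c^{1-q})$. Once the expression is in the form $\N(c)(\Tr(A'c^{q-1}+B'c^{1-q})+r')$, the remainder is the same elementary field arithmetic over $\mathbb F_q$ used in Theorem~\ref{maintheo2}, together with the case distinction $4ab=1$ versus $4ab\ne1$.
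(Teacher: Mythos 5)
Your proposal is correct and follows essentially the same route as the paper's proof: the same reduction via Lemma \ref{nondegenerate}, the same coefficients $A'=a^{q^2}(4(ab)^q-1)$, $B'=b^{q^2}(4ab-1)$ and constant $r'=4\N(a)+4\N(b)+1$, the same appeal to Proposition \ref{AB} together with the observation that $A'B'\in\mathbb F_q$ forces $ab\in\mathbb F_q$, and the same identity $\N(A')+\N(B')-A'B'r'=-(4ab-1)^2(\N(a)+\N(b)+ab)$. The only (harmless) divergence is the case $4ab=1$: the paper factors $4a\,f(x)=(2ax^q+x)^2$ and tests whether $2ax^q+x$ permutes, whereas you stay inside the quadratic-form framework, note $A'=B'=0$, and check $r'\ne0$ via $16\N(a)r'=(8\N(a)+1)^2$; both yield $\N(2a)\ne-1$.
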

\begin{proof}
For $c\in\mathbb F_{q^3}^*$, note that
\[\Tr(c(x^{q+1}+ax^{2q}+bx^2))=\Tr\big(cx^{q+1}+\big(a^{q^2}c^{q^2}+bc\big)x^2\big),\]
and
\[\begin{split}&\mathrel{\phantom{=}}4\N\big(a^{q^2}c^{q^2}+bc\big)+\N(c)-\Tr\big(c^2\big(a^{q^2}c^{q^2}+bc\big)^{q^2}\big)\\&=4\big(\N(ac)+\N(bc)+\Tr\big((ac)^{q^2+q}(bc)^q\big)+\Tr\big(ac(bc)^{q^2+1}\big)\big)\\&\mathrel{\phantom{=}}+\N(c)-\Tr\big(a^{q^2}c^{q^2+2q}+b^{q^2}c^{q^2+2}\big)\\&=4\N(c)\big(\N(a)+\N(b)+\Tr\big(a^{q^2+q}b^qc^{q-1}\big)+\Tr\big(ab^{q^2+1}c^{1-q}\big)\big)\\&\mathrel{\phantom{=}}+\N(c)\big(1-\Tr\big(a^{q^2}c^{q-1}+b^{q^2}c^{1-q}\big)\big).\end{split}\]
Clearly the function is planar if and only if
\[\Tr(Ac^{q-1}+Bc^{1-q})+4\N(a)+4\N(b)+1\ne0\]
for all $c\in\mathbb F_{q^3}^*$, where
\[A=4a^{q^2+q}b^q-a^{q^2}=a^{q^2}(4(ab)^q-1)\]
and
\[B=4ab^{q^2+1}-b^{q^2}=b^{q^2}(4ab-1).\]
If $4ab=1$, then
\[4a(x^{q+1}+ax^{2q}+bx^2)=4ax^{q+1}+4a^2x^{2q}+x^2=(2ax^q+x)^2.\]
This is planar on $\mathbb F_{q^3}$ if and only if $2ax^q+x$ is a permutation polynomial of $\mathbb F_{q^3}$, i.e., $\N(2a)\ne-1$ (this is equivalent to $\N(a)+\N(b)+ab\ne0$).

Suppose now $4ab\ne1$, which means $AB\ne0$. Then by Proposition \ref{AB}, the function $x^{q+1}+ax^{2q}+bx^2$ is planar if and only if
\[\frac{\N(A)+\N(B)}{AB}=4\N(a)+4\N(b)+1.\]
If $ab\in\mathbb F_q$, then
\[\N(A)+\N(B)=\N\big(a^{q^2}(4(ab)^q-1)\big)+\N\big(b^{q^2}(4ab-1)\big)=(\N(a)+\N(b))(4ab-1)^3,\]
and
\[AB=(ab)^{q^2}(4(ab)^q-1)(4ab-1)=ab(4ab-1)^2.\]
Consequently,
\[\begin{split}&\mathrel{\phantom{=}}\N(A)+\N(B)-AB(4\N(a)+4\N(b)+1)\\&=(\N(a)+\N(b))(4ab-1)^3-ab(4ab-1)^2(4\N(a)+4\N(b)+1)\\&=-(4ab-1)^2(\N(a)+\N(b)+ab).\end{split}\]
If $ab\notin\mathbb F_q$, then
\[AB=(ab)^{q^2}\frac{\N(4ab-1)}{4(ab)^{q^2}-1}=\frac{\N(4ab-1)}{4-(ab)^{-q^2}}\notin\mathbb F_q.\]
The results follow immediately.
\end{proof}

\section{Planar Trinomials on Quartic Extensions}\label{quartic}

In this section, let $\Tr$ and $\N$ denote the trace and the norm function from $\mathbb F_{q^4}$ to $\mathbb F_{q^2}$ respectively. A class of planar trinomials on $\mathbb F_{q^4}$ can be constructed in a similar way.

\begin{theorem}
For $a,b\in\mathbb F_{q^4}^*$, the function $x^{q^3+q}+ax^{q^2+1}+bx^2$ is planar on $\mathbb F_{q^4}$ if
\begin{enumerate}
\item[\upshape(1)] $\Tr(a)\ne0$, $\N(b)=\N(a)-\Tr(a)^{1-q}$, $1-4\Tr(a)^{-1-q}$ is a nonzero square in $\mathbb F_q$, and $\N(b)\Tr(a)$ is a nonzero square in $\mathbb F_{q^2}$, or
\item[\upshape(2)] $\Tr(a)=0$, $\N(b)=\N(a)-\theta^{1-q}$ for some $\theta\in\mathbb F_{q^2}^*$ such that $\theta^{\frac{q^2-1}2}=(-1)^{\frac{q-1}2}$, and $\N(b)\theta$ is a nonzero square in $\mathbb F_{q^2}$.
\end{enumerate} 
\end{theorem}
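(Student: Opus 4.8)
The plan is to follow the scheme of the cubic case. First I would reduce planarity to an anisotropy statement over $\mathbb F_{q^2}$. For $c\in\mathbb F_{q^4}^*$ the rewriting of $\Tr(cf(x))$ used throughout the paper gives
\[\Tr_{\mathbb F_{q^4}/\mathbb F_q}\!\big(c(x^{q^3+q}+ax^{q^2+1}+bx^2)\big)=\Tr_{\mathbb F_{q^4}/\mathbb F_q}\!\big((c^{q^3}+ca)x^{q^2+1}+cbx^2\big);\]
since $x^{q^2+1}=\N(x)$ lies in $\mathbb F_{q^2}$ (here $\N,\Tr$ are relative to $\mathbb F_{q^4}/\mathbb F_{q^2}$, as in this section), the inner partial trace $\Tr\big((c^{q^3}+ca)x^{q^2+1}+cbx^2\big)$ is a quadratic form over $\mathbb F_{q^2}$ in $x$, and composing it with $\Tr_{\mathbb F_{q^2}/\mathbb F_q}$ does not change whether it is degenerate. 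Applying Lemma~\ref{nondegenerate} to the quadratic extension $\mathbb F_{q^4}/\mathbb F_{q^2}$ (the roles of $q$ and $n$ there being played by $q^2$ and $2$; the case $c^{q^3}+ca=0$ is immediate), and using $\Tr(c^{q^3})=\Tr(c)^q$ and $\N(cb)=\N(c)\N(b)$, the function $x^{q^3+q}+ax^{q^2+1}+bx^2$ is planar on $\mathbb F_{q^4}$ if and only if
\[(\Tr(c)^q+\Tr(ca))^2\ne 4\N(c)\N(b)\qquad\text{for every }c\in\mathbb F_{q^4}^*.\]

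Next I would pass to coordinates. Fix $\omega\in\mathbb F_{q^4}$ with $\omega^2=\nu$ a nonsquare in $\mathbb F_{q^2}$ (so $\omega^{q^2}=-\omega$), write $c=c_0+c_1\omega$ and $a=a_0+a_1\omega$ with $c_i,a_i\in\mathbb F_{q^2}$, and expand (using $\Tr(c)=2c_0$, $\N(c)=c_0^2-\nu c_1^2$, $\Tr(ca)=2(a_0c_0+\nu a_1c_1)$, $\Tr(a)=2a_0$): the condition becomes that
\[(c_0^q+a_0c_0+\nu a_1c_1)^2=\N(b)(c_0^2-\nu c_1^2)\qquad(\star)\]
has no nonzero solution. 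If $c_0=0$ then $(\star)$ forces $\N(b)=\N(a)-(\Tr(a)/2)^2$, which is excluded in case (1) because $(\Tr(a)/2)^2=\Tr(a)^{1-q}$ would give $1-4\Tr(a)^{-1-q}=0$, and in case (2) because $a_0=0$ while $\theta^{1-q}\ne0$. For $c_0\ne0$, divide $(\star)$ by $c_0^2$, put $z=c_0^{q-1}$ (so $z^{q+1}=1$) and $s=a_0+\nu a_1(c_1/c_0)$, and — assuming $a_1\ne0$, which in case (2) is automatic and which in case (1) I would treat as a short separate subcase — read $(\star)$ as a quadratic in $s$ with leading coefficient $\nu a_1^2+\N(b)\ne0$. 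Substituting the hypothesis $\N(b)=\N(a)-\Tr(a)^{1-q}$ (resp.\ $\N(b)=\N(a)-\theta^{1-q}$) collapses its discriminant to
\[\Delta(z)=4\nu a_1^2\,\N(b)\Big(a_0^2\,\tfrac{m-1}{m}-(z+a_0)^2\Big)\ \text{ in (1)},\qquad \Delta(z)=-4\nu a_1^2\,\N(b)\big(\theta^{1-q}+z^2\big)\ \text{ in (2)},\]
where $m=\N_{\mathbb F_{q^2}/\mathbb F_q}(a_0)$ and $\tfrac{m-1}{m}=1-4\Tr(a)^{-1-q}$. So planarity is equivalent to: $\Delta(z)$ is a nonzero nonsquare in $\mathbb F_{q^2}$ for every $z$ with $z^{q+1}=1$.

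Checking this on the whole norm‑one circle $\{z:z^{q+1}=1\}$ is the heart of the argument, and the step I expect to be the main obstacle. Since $4a_1^2$ is a square and $\nu$ a nonsquare in $\mathbb F_{q^2}$, $\Delta(z)$ is a nonzero nonsquare iff $\N(b)$ times the bracketed factor is a nonzero square; recalling that an element of $\mathbb F_{q^2}^*$ is a square iff its $\mathbb F_q$-norm is a square in $\mathbb F_q$, and that $\N_{\mathbb F_{q^2}/\mathbb F_q}(\N(b))=\N_{\mathbb F_{q^4}/\mathbb F_q}(b)$, this becomes an identity in $\mathbb F_q$. In case (1) the crucial input is the identity
\[\N_{\mathbb F_{q^2}/\mathbb F_q}\!\Big(a_0^2\,\tfrac{m-1}{m}-(z+a_0)^2\Big)=m\,\big(\Tr_{\mathbb F_{q^2}/\mathbb F_q}(z/a_0)+2\big)^2\qquad(z^{q+1}=1),\]
obtained from $z^q=z^{-1}$, $\Tr(a)^{1-q}=a_0^2/m$ and the fact that $(m-1)/m$ is a \emph{square} (so $a_0^2(m-1)/m=(a_0k)^2$ with $k\in\mathbb F_q$): the bracketed factor then has $\mathbb F_q$-norm equal to $m$ times a square, the hypothesis that $\N(b)\Tr(a)$ is a square in $\mathbb F_{q^2}$ forces $\N_{\mathbb F_{q^4}/\mathbb F_q}(b)$ and $m$ to share the same quadratic character in $\mathbb F_q$, and therefore the $\mathbb F_q$-norm of $\N(b)$ times that factor is a square; the nonvanishing $\Tr_{\mathbb F_{q^2}/\mathbb F_q}(z/a_0)\ne-2$ for $z^{q+1}=1$ is precisely where the word \emph{nonzero} in ``$1-4\Tr(a)^{-1-q}$ a nonzero square in $\mathbb F_q$'' is used, since $\Tr_{\mathbb F_{q^2}/\mathbb F_q}(z/a_0)=-2$ with $\N_{\mathbb F_{q^2}/\mathbb F_q}(z/a_0)=1/m$ would force $1-1/m\in\{0,1\}$. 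In case (2) one factors $\theta^{1-q}+z^2=\theta^{1-q}\big(1+(\theta^{(q-1)/2}z)^2\big)$ with $\theta^{1-q}$ a norm‑one element (hence a square in $\mathbb F_{q^2}$); then $v=\theta^{(q-1)/2}z$ runs over $\{v:\N_{\mathbb F_{q^2}/\mathbb F_q}(v)=\theta^{(q^2-1)/2}\}$, and $\N_{\mathbb F_{q^2}/\mathbb F_q}(1+v^2)=\big(v+\theta^{(q^2-1)/2}v^q\big)^2$. The condition $\theta^{(q^2-1)/2}=(-1)^{(q-1)/2}$ makes this factor nonzero (because $\pm\sqrt{-1}$ has $\mathbb F_q$-norm $-(-1)^{(q-1)/2}$, so $1+v^2\ne0$ on this circle) and of quadratic character $(-1)^{(q-1)/2}$, which matches the character of $\N(b)$ coming from ``$\N(b)\theta$ a square in $\mathbb F_{q^2}$''; hence $\N(b)(\theta^{1-q}+z^2)$ is a nonzero square and $\Delta(z)$ a nonzero nonsquare. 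Finally, in the remaining subcase $a_1=0$ of (1) one has $\N(b)=a_0^2\tfrac{m-1}{m}$ and $(\star)$ can be solved for $c_1^2$: planarity then amounts to $a_0^2\tfrac{m-1}{m}-(z+a_0)^2$ being a square in $\mathbb F_{q^2}$ for all $z^{q+1}=1$, which by the same norm identity holds exactly when $m$ is a square in $\mathbb F_q$ — and this is forced by ``$\N(b)\Tr(a)=2a_0^3\tfrac{m-1}{m}$ a square in $\mathbb F_{q^2}$''. Putting the cases together yields the stated sufficient conditions for planarity.
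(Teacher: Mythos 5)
Your argument is correct, and its skeleton coincides with the paper's: both reduce planarity, via Lemma~\ref{nondegenerate} applied to the quadratic extension $\mathbb F_{q^4}/\mathbb F_{q^2}$, to the non-vanishing of $\Tr(c^q+ac)^2-4\N(bc)$ for all $c\in\mathbb F_{q^4}^*$, then view that expression as a quadratic in one coordinate of $c$ and force its discriminant to be a non-square in $\mathbb F_{q^2}$. Where you genuinely diverge is in the choice of coordinates and in how squareness is certified. The paper splits into $a\in\mathbb F_{q^2}$ and $a\notin\mathbb F_{q^2}$, uses $(t_0,t_1)=(\Tr(c),\Tr(ac))$ in the latter case, and makes the key factor \emph{manifestly} a square by writing it as a relative norm $(\alpha t_0^q+\beta t_0)^{q+1}$, with $\alpha,\beta$ built from a square root $\delta$ of $1-4\Tr(a)^{-1-q}$. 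You instead fix a basis $1,\omega$ with $\omega^2=\nu$ a non-square, parametrize by the circle $z=c_0^{q-1}$, and determine the quadratic character of the factor $a_0^2\frac{m-1}{m}-(z+a_0)^2$ (resp.\ $\theta^{1-q}+z^2$) by computing its $\mathbb F_{q^2}/\mathbb F_q$-norm, $(u+2m)^2/m$ with $u=a_0^qz+a_0z^q$ (resp.\ $(v+\theta^{(q^2-1)/2}v^q)^2$), and comparing characters downstairs in $\mathbb F_q$. I verified these norm identities and the two non-vanishing claims (that $\Tr_{\mathbb F_{q^2}/\mathbb F_q}(z/a_0)=-2$ on the circle would force $1-1/m\in\{0,1\}$ once $1-1/m$ is a square, and that $1+v^2\ne0$ on the circle $\N_{\mathbb F_{q^2}/\mathbb F_q}(v)=(-1)^{(q-1)/2}$); each hypothesis of the theorem enters at the same logical point as in the paper, including the leading coefficient $\nu a_1^2+\N(b)\ne 0$, which you assert and which indeed equals $a_0^2\frac{m-1}{m}$ (resp.\ $-\theta^{1-q}$). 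The paper's norm factorization is slicker, since squareness is visible at a glance; your character bookkeeping is more computational but more uniform, turning the paper's separate $a\in\mathbb F_{q^2}$ argument into the short subcase $a_1=0$ of the same calculation.
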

\begin{proof}
For the canonical additive character $\psi$ of $\mathbb F_{q^4}$, we have
\[\psi\big(cx^{q^3+q}+acx^{q^2+1}+bcx^2\big)=\psi\big((c^q+ac)x^{q^2+1}+bcx^2\big).\]
By Lemma \ref{nondegenerate}, the function $x^{q^3+q}+ax^{q^2+1}+bx^2$ is planar on $\mathbb F_{q^4}$ if and only if
\[\Tr(c^q+ac)^2-4\N(bc)\ne0\]
for every $c\in\mathbb F_{q^4}^*$. Suppose first that $a\in\mathbb F_{q^2}^*$, and let $d=\Tr(c)^2-4\N(c)$ and $t=\Tr(c)$. Then
\[\begin{split}\Tr(c^q+ac)^2-4\N(bc)&=\Tr(c)^{2q}+2a\Tr(c)^{q+1}+a^2\Tr(c)^2-4\N(bc)\\&=t^{2q}+2at^{q+1}+a^2t^2+\N(b)(d-t^2),\end{split}\]
and $\Tr(c^q+ac)^2-4\N(bc)=0$ if and only if
\[d=-\N(b)^{-1}(t^{2q}+2at^{q+1}+(a^2-\N(b))t^2).\]
Note that $d\ne0$ when $t=0$, $d=0$ when $c\in\mathbb F_{q^2}$ and $d$ is a non-square in $\mathbb F_{q^2}$ when $c\in\mathbb F_{q^4}\setminus\mathbb F_{q^2}$, so it suffices to show that
\[\N(b)(t^{2q}+2at^{q+1}+(a^2-\N(b))t^2)\]
is a nonzero square in $\mathbb F_{q^2}$ for every $t\in\mathbb F_{q^2}^*$.

Assume the condition (1); that is, $\N(b)=a^2-a^{1-q}$, $1-a^{-1-q}=\delta^2$ for some $\delta\in\mathbb F_q^*$, and $\N(b)a$ is a nonzero square in $\mathbb F_{q^2}$. Let $\alpha=\beta^{-q}a^{-1}$ for some $\beta\in\mathbb F_{q^2}$ such that $\beta^{q+1}=\delta+1$, where $\beta\ne0$, for otherwise $a^{-1-q}=1-\delta^2=0$. It follows that
\[\begin{split}\alpha^{q+1}+\beta^{q+1}&=\frac1{(\delta+1)a^{q+1}}+\delta+1\\&=\frac{1+(\delta+1)^2a^{q+1}}{(\delta+1)a^{q+1}}\\&=\frac{1+(1-a^{-1-q}+2\delta+1)a^{q+1}}{(\delta+1)a^{q+1}}\\&=2,\end{split}\]
and then
\[\begin{split}&\mathrel{\phantom{=}}\N(b)(t^{2q}+2at^{q+1}+(a^2-\N(b))t^2)\\&=\N(b)a(a^{-1}t^{2q}+2t^{q+1}+a^{-q}t^2)\\&=\N(b)a(\alpha\beta^qt^{2q}+(\alpha^{q+1}+\beta^{q+1})t^{q+1}+\alpha^q\beta t^2)\\&=\N(b)a(\alpha t^q+\beta t)^{q+1}.\end{split}\]
This is indeed a nonzero square in $\mathbb F_{q^2}$ for all $t\in\mathbb F_{q^2}^*$, since $\alpha^{q+1}\ne\beta^{q+1}$ (otherwise $\delta+1=\beta^{q+1}=1$), which means $\alpha t^q+\beta t\ne0$.

Suppose now that $a\notin\mathbb F_{q^2}$. Noting that
\[\Tr\left(\frac1{a-a^{q^2}}\right)=\Tr\left(a\frac{a^{q^2}}{a-a^{q^2}}\right)=0,\]
and
\[\Tr\left(a\frac1{a-a^{q^2}}\right)=\Tr\left(-\frac{a^{q^2}}{a-a^{q^2}}\right)=1,\]
we have
\[c=\frac1{a-a^{q^2}}\Tr(ac)-\frac{a^{q^2}}{a-a^{q^2}}\Tr(c)\]
for $c\in\mathbb F_{q^4}^*$. Thus,
\[\N(c)=\N\big(a-a^{q^2}\big)^{-1}(\Tr(ac)^2-\Tr(a)\Tr(ac)\Tr(c)+\N(a)\Tr(c)^2).\]
Let $t_0=\Tr(c)$, $t_1=\Tr(ac)$ and $\gamma=4\N(b)\N\big(a-a^{q^2}\big)^{-1}$, so that
\[\begin{split}&\mathrel{\phantom{=}}\Tr(c^q+ac)^2-4\N(bc)\\&=\Tr(c)^{2q}+2\Tr(c)^q\Tr(ac)+\Tr(ac)^2-4\N(bc)\\&=t_0^{2q}+2t_0^qt_1+t_1^2-\gamma(t_1^2-\Tr(a)t_0t_1+\N(a)t_0^2).\end{split}\]
Since $a\notin\mathbb F_{q^2}$, the map $c\mapsto(t_0,t_1)$ is bijective from $\mathbb F_{q^4}$ to $\mathbb F_{q^2}\times\mathbb F_{q^2}$, and thus $\Tr(c^q+ac)^2-4\N(bc)\ne0$ for all $c\in\mathbb F_{q^4}^*$ if and only if
\[(t_0^{2q}+2t_0^qt_1+t_1^2)-\gamma(t_1^2-\Tr(a)t_0t_1+\N(a)t_0^2)\ne0\]
for all $t_0,t_1\in\mathbb F_{q^2}$ with $t_0$ or $t_1$ nonzero. If $t_0=0$, then $\Tr(c^q+ac)^2-4\N(bc)=(1-\gamma)t_1^2$, and if $\gamma=1$, then $\Tr(c^q+ac)^2-4\N(bc)=0$ for all $t_1\in\mathbb F_{q^2}$. Therefore, assume $\gamma\ne1$ and then
\[\Tr(c^q+ac)^2-4\N(bc)=(1-\gamma)t_1^2+(2t_0^q+\gamma\Tr(a)t_0)t_1+t_0^{2q}-\gamma\N(a)t_0^2\]
can be viewed as a quadratic polynomial in $t_1$ over $\mathbb F_{q^2}$, whose discriminant is
\[\begin{split}D&=(2t_0^q+\gamma\Tr(a)t_0)^2-4(1-\gamma)(t_0^{2q}-\gamma\N(a)t_0^2)\\&=4\gamma t_0^{2q}+4\gamma\Tr(a)t_0^{q+1}+(\gamma^2\Tr(a)^2+4(1-\gamma)\gamma\N(a))t_0^2\\&=4\gamma t_0^{2q}+4\gamma\Tr(a)t_0^{q+1}+(\gamma^2(\Tr(a)^2-4\N(a))+4\gamma\N(a))t_0^2\\&=4\gamma t_0^{2q}+4\gamma\Tr(a)t_0^{q+1}+4\gamma(\N(a)-\N(b))t_0^2.\end{split}\]
Altogether, it suffices to show that $\gamma\ne1$ and $D$ is a non-square in $\mathbb F_{q^2}$ for all $t_0\in\mathbb F_{q^2}^*$.

Assuming the condition (1), we have $\Tr(a)\ne0$, $1-4\Tr(a)^{-1-q}=\delta^2$ for some $\delta\in\mathbb F_q^*$, and $\gamma\ne1$ since
\[4\N(b)-\N\big(a-a^{q^2}\big)=4\N(a)-4\Tr(a)^{1-q}+\big(a-a^{q^2}\big)^2=\Tr(a)^2(1-4\Tr(a)^{-1-q})\ne0.\]
Let $\alpha=\beta^{-q}\Tr(a)^{-1}$ for some $\beta\in\mathbb F_{q^2}$ such that $\beta^{q+1}=\frac{\delta+1}2$ ($\beta\ne0$ as easily seen). Then
\[\begin{split}\alpha^{q+1}+\beta^{q+1}&=\frac2{(\delta+1)\Tr(a)^{q+1}}+\frac{\delta+1}2\\&=\frac{4+(\delta+1)^2\Tr(a)^{q+1}}{2(\delta+1)\Tr(a)^{q+1}}\\&=\frac{4+(1-4\Tr(a)^{-1-q}+2\delta+1)\Tr(a)^{q+1}}{2(\delta+1)\Tr(a)^{q+1}}\\&=1.\end{split}\]
Hence,
\[\begin{split}&\mathrel{\phantom{=}}\Tr(a)^{-1}t_0^{2q}+t_0^{q+1}+\Tr(a)^{-1}(\N(a)-\N(b))t_0^2\\&=\Tr(a)^{-1}t_0^{2q}+t_0^{q+1}+\Tr(a)^{-q}t_0^2\\&=\alpha\beta^q t_0^{2q}+(\alpha^{q+1}+\beta^{q+1})t_0^{q+1}+\alpha^q\beta t_0^2\\&=(\alpha t_0^q+\beta t_0)^{q+1},\end{split}\]
and then
\[D=4\gamma\Tr(a)(\alpha t_0^q+\beta t_0)^{q+1}\]
is a non-square in $\mathbb F_{q^2}$ for all $t_0\in\mathbb F_{q^2}^*$, for
\[\gamma\Tr(a)=-4\big(a-a^{q^2}\big)^{-2}\N(b)\Tr(a)\]
is a non-square in $\mathbb F_{q^2}$ and $\alpha t_0^q+\beta t_0\ne0$ as $\alpha^{q+1}\ne\beta^{q+1}$.

Assume the condition (2). Then $\Tr(a)=0$ and $\gamma\ne1$ as
\[4\N(b)-\N\big(a-a^{q^2}\big)=4\N(b)-4\N(a)=-4\theta^{1-q}\ne0.\]
Moreover,
\[D=4\gamma t_0^{2q}+4\gamma(\N(a)-\N(b))t_0^2=4\gamma(t_0^{2q}+\theta^{1-q}t_0^2)=4\gamma\theta^{-q}(\theta^qt_0^{2q}+\theta t_0^2).\]
Here $\theta^qt_0^{2q}+\theta t_0^2\ne0$ for $t_0\in\mathbb F_{q^2}^*$, for otherwise $\theta t_0^2\notin\mathbb F_q$ and $-(\theta t_0^2)^{q+1}=(\theta t_0^2)^2$, which implies
\[(-1)^{\frac{q-1}2}\theta^{\frac{q^2-1}2}=(\theta t_0^2)^{q-1}\ne1,\]
contradicting the assumption. We conclude that $D$ is a non-square in $\mathbb F_{q^2}$ for all $t_0\in\mathbb F_{q^2}^*$, since
\[\gamma\theta^{-q}=-4\big(a-a^{q^2}\big)^{-2}\N(b)\theta^{-q}\]
is a non-square in $\mathbb F_{q^2}$ and $\theta^qt_0^{2q}+\theta t_0^2\in\mathbb F_q^*$.
\end{proof}

We show that this class of planar functions are not equivalent to monomials in general. For simplicity, let $q$ be a prime, so that we only need to consider the equivalence to $x^2$. Assume that
\[L_0\big(x^{q^3+q}+ax^{q^2+1}+bx^2\big)=L_1(x)^2\]
for some linearized permutation polynomials $L_0$ and $L_1$ over $\mathbb F_{q^4}$, and without loss of generality, let the coefficient of $x$ in $L_1(x)$ be nonzero. Comparing the degree, one easily finds that $L_1(x)=\alpha x^{q^2}+\beta x$ for some $\alpha,\beta\in\mathbb F_{q^4}$. Then
\[L_0\big(x^{q^3+q}+ax^{q^2+1}+bx^2\big)=\alpha^2x^{2q^2}+2\alpha\beta x^{q^2+1}+\beta^2x^2.\]
It follows that
\[L_0(bx^2)=\alpha^2x^{2q^2}+\beta^2x^2,\]
and thus,
\[L_0(x)=\alpha^2b^{-q^2}x^{q^2}+\beta^2b^{-1}x.\]
On the other hand,
\[L_0(1)x^{q^3+q}+L_0(a)x^{q^2+1}=L_0\big(x^{q^3+q}+ax^{q^2+1}\big)=2\alpha\beta x^{q^2+1},\]
which means $L_0(1)=0$, a contradiction.

\section*{Acknowledgement}

The work of the first author was supported by the China Scholarship Council. The funding corresponds to the scholarship for the PhD thesis of the first author in Paris, France. The French Agence Nationale de la Recherche partially supported the second author's work through ANR BARRACUDA (ANR-21-CE39-0009).


\end{document}